\newtheorem{theorem}{Theorem}[subsection]
\theoremstyle{definition}
\newtheorem{sit}[theorem]{Situation}
\newtheorem{rem}[theorem]{Remark}
\numberwithin{theorem}{section}
\newcommand{\mb}{\mathbb}
\newcommand{\mc}{\mathcal}
\newcommand{\mf}{\mathfrak}
\newcommand{\s}{\subset}
\begin{document}
\title[Parametrix for the localization of the Bergman metric...]{Parametrix for the localization of the Bergman metric on strictly pseudoconvex domains}
\author{Arkadiusz Lewandowski}
\address{Institute of Mathematics\\ Faculty of Mathematics and Computer Science\\ Jagiellonian University\\ {\L}ojasiewicza 6,
30-348 Kraków, Poland}
\email{Arkadiusz.Lewandowski@im.uj.edu.pl}
\begin{abstract}
We give the parameter version of localization theorem for Bergman metric near the boundary points of strictly pseudoconvex domains. The approximation theorem for square integrable holomorphic functions on such domains in the spirit of Graham-Kerzman is proved in the hereby paper, as well.
\end{abstract}

\subjclass[2010]{Primary 32F45; Secondary 32T15, 32T40}

\keywords{Bergman metric, Bergman kernel, strictly pseudoconvex domains, peak functions}

\maketitle
\section{Introduction} Let $G\s\mb{C}^n$ be a bounded domain. Let $L^2_h(G)$ denote the Hilbert space of all square integrable holomorphic functions on $G$ equipped with the norm $\|\cdot\|_{L^2(G)}$ arising from the standard scalar product $(f,g):=\int_Gf\bar{g}d\mc{L}^{2n}$. It is well known that the Bergman kernel of $G$ restricted to the diagonal may be represented as
$$
\mf{K}_G(z)=\sup\{|f(z)|^2:f\in L^2_h(G):\|f\|_{L^2_h}\leq 1\}.
$$
The Levi form of smooth strictly plurisubharmonic function $\log\mf{K}_G$, denoted by $\mc{L}_{\mf{K}_G}$, may be used to define the Bergman metric of $G$  
$$\displaystyle{
\boldsymbol{\beta}_G(z;X):=\sqrt{\mc{L}_{\mf{K}_G}(z;X)},\quad z\in G, X\in\mb{C}^n.}
$$
This latter function allows the following description
$$
\boldsymbol{\beta}_G(z;X)=\frac{\sup\{|f'_X(z)|:f\in L^2_h(G):\|f\|_{L^2(G)}\leq 1,f(z)=0\}}{\sqrt{\mf{K}_G(z)}}=:\frac{M_G(z;X)}{\sqrt{\mf{K}_G(z)}}
$$
for $z\in G,X=(X_1,\ldots,X_n)\in\mb{C}^n,$ where $f'_X(z):=\sum_{j=1}^n\frac{\partial f}{\partial z_j}X_j.$ 

The following localization result for the Bergman metric has been announced in [\ref{JP}, Theorem 19.3.6]:
\begin{theorem} Let $G$ be a strongly pseudoconvex domain with $\mc{C}^2$ boundary and let $R>0$ be such that for any $\zeta_0\in\partial G$ the set $G\cap\mb{B}(\zeta_0,R)$ is connected. Then, for every $\varepsilon>0$ there exists a $\delta\in(0,R)$ such that 
\begin{enumerate}[\indent \upshape (1')]
\item $M_G(z;X)\leq M_{G\cap\mb{B}(\zeta_0,R)}(z;X)\leq (1+\varepsilon)M_G(z;X)$
\item $\mf{K}_G(z)\leq\mf{K}_{G\cap\mb{B}(\zeta_0,R)}(z)\leq(1+\varepsilon)\mf{K}_G(z)$
\item $(1+\varepsilon)^{-1}\boldsymbol{\beta}_{G\cap\mb{B}(\zeta_0,R)}(z;X)\leq\boldsymbol{\beta}_G(z;X)\leq\sqrt{1+\varepsilon}\boldsymbol{\beta}_{G\cap\mb{B}(\zeta_0,R)}(z;X)$
\end{enumerate}
for all $z\in G\cap\mb{B}(\zeta_0,\delta),X\in\mb{C}^n.$ Additionally, $R$ can be chosen so that $\delta$ does not depend on the boundary point $\zeta_0.$
\end{theorem}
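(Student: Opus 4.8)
The plan is to prove a Graham--Kerzman type approximation on $G$ by functions coming from $G_{\zeta_0}:=G\cap\mb{B}(\zeta_0,R)$, obtained through a cut-off and a $\bar\partial$-correction made negligible by a power of a peak function, and then to feed the resulting comparison of admissible competitors into the extremal descriptions of $M$, $\mf{K}$, $\boldsymbol{\beta}$. The left inequalities in (1') and (2') are free: restriction carries competitors for $G$ into competitors for $G_{\zeta_0}$ and preserves the condition $f(z)=0$, so $M_G\le M_{G_{\zeta_0}}$ and $\mf{K}_G\le\mf{K}_{G_{\zeta_0}}$ pointwise; (3') will follow from (1')--(2') by $\boldsymbol{\beta}=M/\sqrt{\mf{K}}$.

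\textbf{Geometric input and the correction.}
Since $\partial G$ is compact and of class $\mc{C}^2$ one may fix $R>0$ with $G_{\zeta_0}$ connected for every $\zeta_0$ and, shrinking $R$, obtain from the Levi polynomial a family of entire functions $h_{\zeta_0}$ with $h_{\zeta_0}(\zeta_0)=1$, $|h_{\zeta_0}|\le1$ on $\overline{G}\cap\mb{B}(\zeta_0,R)$ and, there, $|h_{\zeta_0}(w)|\le1-c_0|w-\zeta_0|^2$ and $|h_{\zeta_0}(w)-1|\le C_1|w-\zeta_0|$, with $c_0,C_1>0$ independent of $\zeta_0$; this is where strong pseudoconvexity is used and it is the source of the uniformity. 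Fix also a radial cut-off $\chi=\eta(|\cdot-\zeta_0|)$ with $\eta\equiv1$ on $[0,R']$ and $\operatorname{supp}\,\eta\s[0,R'')$ for fixed ratios $0<R'<R''<R$. Given $f\in L^2_h(G_{\zeta_0})$ and $N\in\mb{N}$, the $(0,1)$-form $v:=(fh_{\zeta_0}^{N})\,\bar\partial\chi$ is $\bar\partial$-closed on the bounded pseudoconvex domain $G$, is supported where $|w-\zeta_0|\ge R'$ (so $|h_{\zeta_0}|\le\rho:=1-c_0R'^2<1$ there) and, for $z\in\mb{B}(\zeta_0,\delta)$ with $\delta<R'/2$, vanishes near $z$. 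Solving $\bar\partial u_N=v$ on $G$ by Hörmander's theorem with weight $\psi(w):=|w|^2+(2n+2)\log|w-z|$, noting $i\partial\bar\partial\psi\ge i\partial\bar\partial|w|^2>0$ and that $|v|^2e^{-\psi}$ is integrable (as $v\equiv0$ near $z$), one gets $u_N$ with $\int_G|u_N|^2e^{-\psi}\le C\int_G|v|^2e^{-\psi}$, $C$ absolute; bounding $e^{-\psi}$ above on $\operatorname{supp}\,v$ and below on $G$ gives
$$\|u_N\|_{L^2(G)}\le C_0\,\rho^{N}\,\|f\|_{L^2(G_{\zeta_0})},\qquad C_0=C_0\big(n,\operatorname{diam}\,G,R',\|\bar\partial\chi\|_\infty\big).$$
Since $v\equiv0$ near $z$, $u_N$ is holomorphic near $z$, and the $(2n+2)\log|w-z|$ singularity of $\psi$ forces $u_N$ to vanish to order $\ge2$ at $z$, i.e. $u_N(z)=0$ and $u'_{N,X}(z)=0$. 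Hence $g_N:=\chi fh_{\zeta_0}^{N}-u_N\in L^2_h(G)$ and, using $\chi\equiv1$ near $z$ and $|\chi|,|h_{\zeta_0}|\le1$,
$$g_N(z)=f(z)\,h_{\zeta_0}(z)^{N},\qquad g'_{N,X}(z)=\big(fh_{\zeta_0}^{N}\big)'_X(z),\qquad\|g_N\|_{L^2(G)}\le\big(1+C_0\rho^{N}\big)\|f\|_{L^2(G_{\zeta_0})};$$
in particular, if $f(z)=0$ then $g_N(z)=0$ and $g'_{N,X}(z)=h_{\zeta_0}(z)^{N}f'_X(z)$.

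\textbf{Conclusion.}
Now insert the extremal functions. With $f$ of unit norm and $|f(z)|^2=\mf{K}_{G_{\zeta_0}}(z)$,
$$\mf{K}_G(z)\ge\frac{|g_N(z)|^2}{\|g_N\|_{L^2(G)}^2}\ge\frac{|h_{\zeta_0}(z)|^{2N}}{(1+C_0\rho^{N})^2}\,\mf{K}_{G_{\zeta_0}}(z),$$
and with $f$ of unit norm, $f(z)=0$, $|f'_X(z)|=M_{G_{\zeta_0}}(z;X)$,
$$M_G(z;X)\ge\frac{|g'_{N,X}(z)|}{\|g_N\|_{L^2(G)}}\ge\frac{|h_{\zeta_0}(z)|^{N}}{1+C_0\rho^{N}}\,M_{G_{\zeta_0}}(z;X).$$
Given $\varepsilon>0$, choose $N$ (depending only on $\varepsilon$ and the uniform data) with $(1+C_0\rho^{N})^2\le1+\tfrac{\varepsilon}{2}$, then $\delta\in(0,R'/2)$ with $|h_{\zeta_0}(z)|^{2N}\ge\tfrac{1+\varepsilon/2}{1+\varepsilon}$ for all $\zeta_0$ and all $z\in\mb{B}(\zeta_0,\delta)$ (possible since $|h_{\zeta_0}(w)-1|\le C_1|w-\zeta_0|$). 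The two displays then yield $M_{G_{\zeta_0}}(z;X)\le(1+\varepsilon)M_G(z;X)$ and $\mf{K}_{G_{\zeta_0}}(z)\le(1+\varepsilon)\mf{K}_G(z)$, i.e. (1') and (2'), and (3') follows by substituting these into $\boldsymbol{\beta}=M/\sqrt{\mf{K}}$. As $N$, and then $\delta$, were chosen using only data that is uniform over $\partial G$, $\delta$ may be taken independent of $\zeta_0$.

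\textbf{Main obstacle.}
The hard part is the first step: producing the \emph{uniform} family $\{h_{\zeta_0}\}$ of bounded local peak functions with the quantitative bounds $|h_{\zeta_0}(w)|\le1-c_0|w-\zeta_0|^2$ and $|h_{\zeta_0}(w)-1|\le C_1|w-\zeta_0|$, with constants uniform over the compact $\mc{C}^2$ boundary. After that, the $\bar\partial$-estimate, the second-order vanishing of $u_N$ at $z$, and the optimization in $N$ then $\delta$ are routine; the only care needed is to check that every constant entering $C_0$, $\rho$ and the final smallness conditions depends solely on $n$, $G$ and the fixed ratios $R'/R$, $R''/R$, which is exactly what delivers the $\zeta_0$-independence of $\delta$.
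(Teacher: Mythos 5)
Your argument is correct, and it takes a genuinely different route from the one in the paper (which treats the single-domain statement as known from [JP] and proves the parametric analogue, Theorems \ref{Bergman} and \ref{L2h}). The paper's proof is a two-step affair: first an extension theorem (Theorem \ref{L2h}) produces $\hat f\in L^2_h(G_t)$ matching the $1$-jet of $f$ at the point, by a cut-off, multiplication of the $\bar\partial$-data by a high power of a \emph{global} peak function $h_t(\cdot;\zeta_0)$ supplied by Theorem \ref{Main}, and solution of $\bar\partial$ with a uniform Hörmander constant on a strictly pseudoconvex \emph{deformation} $G^t_1\supset G_t$ — the enlargement is what yields interior sup-norm and Cauchy estimates (via the Bergman inequality) needed to add an explicit degree-one polynomial correcting the jet; then, in the proof of the localization itself, $\hat f$ is multiplied by another power of the global peak function to make its mass away from $\zeta_0$ negligible. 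You compress all of this into a single $\bar\partial$ problem on $G$ itself: the singular weight $|w|^2+(2n+2)\log|w-z|$ forces the correction $u_N$ to vanish to second order at $z$, so no jet-correcting polynomial, no interior elliptic estimates, and no deformation of $G$ are needed; and you replace the global peak functions by the local holomorphic support functions coming from the Levi polynomial, whose bounds are needed only on $\overline G\cap\mb{B}(\zeta_0,R)$ because the cut-off confines the $\bar\partial$-data there. The "main obstacle" you flag is, for a single $\mc{C}^2$ strictly pseudoconvex domain, classical and uniform over $\partial G$ by compactness (Taylor expansion of a defining function strictly plurisubharmonic on a neighborhood of $\partial G$, condition (IV) of Section \ref{preliminaries}), so your proof is complete and in fact leaner for this statement; the paper's heavier apparatus (the peak functions of [\ref{AL1}], the deformations $G^s_j$, the extension-then-peak two-step) is what delivers uniformity in the parameter $t$ in the family setting of Situation \ref{Situation}. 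Two small points you should make explicit: your construction requires $R$ small enough for the Levi-polynomial estimates, whereas the statement fixes an arbitrary $R$ with the connectivity property — this is harmless since $M$ and $\mf{K}$ decrease under enlargement of the domain, so the inequalities proved for a smaller radius imply those for the given $R$ with the same $\zeta_0$-independent $\delta$; and in the extremal step for $M_{G\cap\mb{B}(\zeta_0,R)}(z;X)$ either use near-extremal competitors or note that the supremum is attained, both routine.
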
 
\noindent See also [\ref{Die}, Theorem 1] for another result of this type. We would like to give a parameter version of this theorem, with uniform size of respective neighborhoods of boundary points. In the beginning, let us settle the following:
\begin{sit}
Let $(G_t)_{t\in T}$ be a family of bounded strictly pseudoconvex domains with $\mc{C}^2$-smooth boundaries, where $T$ is a compact metric space with associated metric $d$. Suppose we have a domain $U\s\s\mb{C}^n$ such that
\begin{enumerate}[(i)]
\item $\displaystyle{\bigcup_{t\in T}\partial G_t\s\s U},$
\item for each $t\in T$ there exists a defining function $r_t\in\mc{C}^2(U)$ for $G_t$ such that its Levi form $\mc{L}_{r_t}$ is positive on $U\times(\mb{C}^n\setminus\{0\}),$
\item for any $\varepsilon>0$ there exists a $\delta>0$ such that for any $s,t\in T$ with $d(s,t)\leq \delta$ there is $\|r_t-r_s\|_{\mc{C}^2(U)}<\varepsilon$.\label{C2}
\end{enumerate}\label{Situation}
\end{sit}
We will prove the following
\begin{theorem}\label{Bergman}
Let $(G_t)_{t\in T}$ be a family of strictly pseudoconvex domains as in Situation \ref{Situation}. Then there exists an $R>0$ such that for any $t\in T$ and $\zeta_t\in\partial G_t$ the set $G_t\cap\mb{B}(\zeta_t,R)$ is connected, and for any such $R$, and any $\varepsilon>0$ there exists a $\theta\in(0,R)$ such that for any $t\in T$ and $\zeta_t\in\partial G_t$ we have
\begin{enumerate}[\indent \upshape (1)]
\item $M_{G_t}(z;X)\leq M_{G_t\cap\mb{B}(\zeta_t,R)}(z;X)\leq (1+\varepsilon)M_{G_t}(z;X)$
\item $\mf{K}_{G_t}(z)\leq\mf{K}_{G_t\cap\mb{B}(\zeta_t,R)}(z)\leq(1+\varepsilon)\mf{K}_{G_t}(z)$
\item $(1+\varepsilon)^{-1}\boldsymbol{\beta}_{G_t\cap\mb{B}(\zeta_t,R)}(z;X)\leq\boldsymbol{\beta}_{G_t}(z;X)\leq\sqrt{1+\varepsilon}\boldsymbol{\beta}_{G_t\cap\mb{B}(\zeta_t,R)}(z;X)$
\end{enumerate}
for all $z\in G_t\cap\mb{B}(\zeta_t,\theta),X\in\mb{C}^n.$ 
\end{theorem}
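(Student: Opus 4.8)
The plan is to revisit the proof of the (non-parametrized) localization theorem stated above, of Graham--Kerzman type, keeping explicit track of all constants and showing that hypothesis \ref{C2} of Situation~\ref{Situation}, together with compactness of $T$, forces them to be uniform in $t$; a purely soft compactness argument starting from that theorem does not seem available, since its radii are not obviously stable under $\mc C^2$-perturbations of the domain. Two preliminary reductions: in each of (1) and (2) the left inequality is immediate, because $G_t\cap\mb B(\zeta_t,R)\s G_t$ and restriction of an element of $L^2_h(G_t)$ to a subdomain does not increase its $L^2$-norm; and (3) follows formally from (1) and (2) via $\boldsymbol{\beta}_G(z;X)=M_G(z;X)/\sqrt{\mf K_G(z)}$. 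So it suffices to prove, uniformly in $t$ and $\zeta_t$, the two upper bounds in (1) and (2); moreover, since $M_{G_t\cap\cdot}$ and $\mf K_{G_t\cap\cdot}$ are monotone under shrinking the subdomain, it is enough to do this for one conveniently small admissible value of $R$ (a larger one is then recovered by monotonicity, after shrinking $\theta$). By \ref{C2} and compactness of $T$, the map $t\mapsto r_t$ is continuous into $\mc C^2(U)$, hence $\{r_t:t\in T\}$ is compact there; this yields $\sup_t\|r_t\|_{\mc C^2(U)}=:K<\infty$ and a common modulus of continuity for the second derivatives of the $r_t$. Fixing $U'$ with $\bigcup_t\partial G_t\s\s U'\s\s U$ and combining (ii) with continuity and compactness, we obtain uniform constants $\lambda_0>0$ with $\mc L_{r_t}(w;X)\geq\lambda_0|X|^2$ on $\overline{U'}\times\mb C^n$ and $c_0>0$ with $|\nabla r_t|\geq c_0$ on $\partial G_t$, for all $t$. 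A quantitative implicit function theorem with constants depending only on $K$ and $c_0$ then produces $R>0$ (also taken so small that $\mb B(\zeta_t,R)\s U'$) for which, for all $t$ and $\zeta_t\in\partial G_t$, $\partial G_t$ is inside $\mb B(\zeta_t,R)$ a graph over the tangent hyperplane with uniformly small $\mc C^1$-norm; the standard argument then shows $G_t\cap\mb B(\zeta_t,R)$ is connected. This is the $R$ of the statement, and every constant below may depend on it.

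For peak functions I would use the exponentials of the Levi polynomials and — this is the point that simplifies the bookkeeping — avoid any globalization. For a fixed $t$ and $\zeta:=\zeta_t\in\partial G_t$, let $P_{t,\zeta}$ be the Levi polynomial of $r_t$ at $\zeta$ (the holomorphic Taylor polynomial of degree $\leq 2$ of $r_t$ at $\zeta$ built from the pure derivatives $\partial/\partial w_j$ and $\partial^2/\partial w_j\partial w_k$), and set $\phi_{t,\zeta}:=\exp(P_{t,\zeta})$, an entire function with $\phi_{t,\zeta}(\zeta)=1$. Taylor expanding $r_t$ at $\zeta$ and using $r_t(\zeta)=0$, $r_t\leq 0$ on $\overline{G_t}$, and $\mc L_{r_t}(\zeta;\cdot)\geq\lambda_0|\cdot|^2$, the common modulus of continuity of $D^2r_t$ yields a uniform $r_0\in(0,R]$ with $\operatorname{Re}P_{t,\zeta}(w)\leq-\tfrac{\lambda_0}{4}|w-\zeta|^2$ for $w\in\overline{G_t}\cap\mb B(\zeta,r_0)$, hence $|\phi_{t,\zeta}(w)|\leq 1-c_1|w-\zeta|^2$ there with $c_1>0$ uniform; while $\|r_t\|_{\mc C^2(U)}\leq K$ gives $|\phi_{t,\zeta}(z)|\geq 1-C_1|z-\zeta|$ for $z\in G_t\cap\mb B(\zeta,r_0)$ with $C_1$ uniform. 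In particular $q:=1-c_1(3R/4)^2<1$ satisfies $|\phi_{t,\zeta}|\leq q$ on $\overline{G_t}\cap(\mb B(\zeta,R)\setminus\mb B(\zeta,3R/4))$ and $|\phi_{t,\zeta}|\leq 1$ on $\overline{G_t}\cap\mb B(\zeta,R)$, with all constants uniform in $t$ and $\zeta_t$; this is all that will be needed, and it is why the $\bar\partial$-patching that usually produces a global peak function can be dispensed with.

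Now the parametrix. Fix $t$, $\zeta=\zeta_t$, and $z\in G_t\cap\mb B(\zeta,\theta)$ with $\theta\in(0,\min(r_0,R/4)]$ to be fixed. Choose once and for all $\chi_0\in\mc C^\infty_c(\mb B(0,1))$ with $\chi_0\equiv 1$ on $\mb B(0,3/4)$, and set $\chi(w):=\chi_0((w-\zeta)/R)$; then $\chi\equiv 1$ on $\mb B(\zeta,3R/4)$, $\operatorname{supp}\chi\s\mb B(\zeta,R)$, $\|\bar\partial\chi\|_\infty\leq C_0/R$. Given $f\in L^2_h(G_t\cap\mb B(\zeta,R))$ and $m\in\mb N$, the $(0,1)$-form $\beta_m:=\phi_{t,\zeta}^m\,f\,\bar\partial\chi$ is $\bar\partial$-closed, smooth on the pseudoconvex domain $G_t$, and supported in $A:=G_t\cap\{3R/4\leq|w-\zeta|\leq R\}$, where $|w-z|\geq R/2$. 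Solving $\bar\partial u_m=\beta_m$ on $G_t$ by Hörmander's $L^2$-estimates with the plurisubharmonic weight $\psi(w):=|w|^2+2(n+1)\log|w-z|$, whose complex Hessian dominates the standard one, gives $u_m$ with $\int_{G_t}|u_m|^2e^{-\psi}\,d\mc L^{2n}\leq\int_A|\beta_m|^2e^{-\psi}\,d\mc L^{2n}$; since $|\phi_{t,\zeta}|\leq q$ on $A$ and $e^{-\psi}$ is bounded above on $A$ and bounded below on the bounded set $G_t$, this forces $\|u_m\|_{L^2(G_t)}\leq C_R\,q^m\|f\|_{L^2(G_t\cap\mb B(\zeta,R))}$ with $C_R$ uniform, and, because $\bar\partial\chi$ vanishes near $z$ and $e^{-\psi}\asymp|w-z|^{-2(n+1)}$ there, $u_m$ is holomorphic in a neighbourhood of $z$ with $u_m(z)=0$ and $(u_m)'_X(z)=0$. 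Put $\tilde f_m:=\phi_{t,\zeta}^m\chi f-u_m$. Then $\bar\partial\tilde f_m=0$, so $\tilde f_m\in L^2_h(G_t)$ with $\|\tilde f_m\|_{L^2(G_t)}\leq(1+C_R q^m)\|f\|_{L^2(G_t\cap\mb B(\zeta,R))}$ (as $|\phi_{t,\zeta}^m\chi|\leq 1$ on $\operatorname{supp}\chi\cap G_t$), and since $\chi\equiv 1$ near $z$ we get $\tilde f_m(z)=\phi_{t,\zeta}(z)^m f(z)$ and $(\tilde f_m)'_X(z)=m\,\phi_{t,\zeta}(z)^{m-1}(\phi_{t,\zeta})'_X(z)\,f(z)+\phi_{t,\zeta}(z)^m f'_X(z)$.

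To conclude, fix $\varepsilon>0$. Taking $f$ extremal for $\mf K_{G_t\cap\mb B(\zeta,R)}(z)$, the competitor $\tilde f_m$ gives $\mf K_{G_t}(z)\geq|\phi_{t,\zeta}(z)|^{2m}(1+C_R q^m)^{-2}\,\mf K_{G_t\cap\mb B(\zeta,R)}(z)$. Taking $f$ extremal for $M_{G_t\cap\mb B(\zeta,R)}(z;X)$, the condition $f(z)=0$ kills both $\tilde f_m(z)$ and the first summand in $(\tilde f_m)'_X(z)$, so $\tilde f_m$ is an admissible competitor for $M_{G_t}(z;X)$ and yields $M_{G_t}(z;X)\geq|\phi_{t,\zeta}(z)|^{m}(1+C_R q^m)^{-1}\,M_{G_t\cap\mb B(\zeta,R)}(z;X)$. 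Finally choose $m=m(\varepsilon)\in\mb N$ (independent of $t,\zeta,z$, possible since $q<1$ and $C_R$ are fixed) with $1+C_R q^m\leq(1+\varepsilon)^{1/4}$, and then $\theta=\theta(\varepsilon)\in(0,\min(r_0,R/4)]$ (independent of $t,\zeta$, possible since $m$ and $C_1$ are now fixed) with $(1-C_1\theta)^{2m}\geq(1+\varepsilon)^{-1/2}$, so that $|\phi_{t,\zeta}(z)|^{2m}\geq(1-C_1\theta)^{2m}\geq(1+\varepsilon)^{-1/2}$ for every $z\in G_t\cap\mb B(\zeta,\theta)$; substituting gives $\mf K_{G_t\cap\mb B(\zeta_t,R)}(z)\leq(1+\varepsilon)\mf K_{G_t}(z)$ and $M_{G_t\cap\mb B(\zeta_t,R)}(z;X)\leq(1+\varepsilon)^{1/2}M_{G_t}(z;X)\leq(1+\varepsilon)M_{G_t}(z;X)$ on $G_t\cap\mb B(\zeta_t,\theta)$, uniformly in $t$ and $\zeta_t$, and then (3) follows. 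The step I expect to cost the most care is making the constants $K,c_0,\lambda_0,r_0,c_1,C_1$ — and hence $R$ and $\theta$ — genuinely uniform: one must verify that compactness of $\{r_t\}$ in $\mc C^2(U)$ really does propagate to a uniform lower bound for the Levi forms, a uniform non-degeneracy/validity radius for the Levi polynomials and the resulting peak estimates, and a uniform graph description of the $\partial G_t$ near their points. Once that bookkeeping is in place, the Hörmander step and the $m$-versus-$\theta$ balancing are routine.
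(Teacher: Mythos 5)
Your argument is correct in substance, but it follows a genuinely different route from the paper. The paper first proves Theorem \ref{L2h}: using the uniform \emph{global} peak functions $h_t(\cdot;\zeta)$ of Theorem \ref{Main} (holomorphic on neighbourhoods $\widehat{G_t}$ of $\overline{G_t}$, from [\ref{AL1}]) and solving $\bar\partial$-problems on carefully constructed deformations $G_j^s$ of the domains with a Hörmander constant uniform in $t$, it produces an $\hat f\in L^2_h(G_t)$ matching the $1$-jet of $f$ at $w$ with norm $\le L\|f\|$ and small error near $\zeta_t$; then, in the proof of Theorem \ref{Bergman}, it multiplies $\hat f$ by a high power $h_t(\cdot;\zeta_0)^k$ of the global peak function to damp the mass away from $\zeta_0$ and bring the norm down to $1+2\varepsilon'$ while keeping the value (and, when $f(z)=0$, the derivative) at $z$ comparable. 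You bypass both ingredients: your peak data is purely local (the exponential of the Levi polynomial, an entire function, so no globalization is needed since you only use estimates on $\overline{G_t}\cap\mb B(\zeta,R)$), and a single weighted Hörmander problem on $G_t$ itself, with weight $|w|^2+2(n+1)\log|w-z|$, simultaneously performs the extension and forces the correction $u_m$ to vanish to second order at $z$, so no separate jet-interpolation or approximation step is needed; uniformity of all constants reduces to compactness of $\{r_t\}$ in $\mc C^2(U)$, and the Hörmander constant is trivially uniform because the weight's complex Hessian dominates the identity. What the paper's route buys is Theorem \ref{L2h} itself (the Graham--Kerzman-type approximation statement (A)--(C), of independent interest), which your argument does not yield; what your route buys is self-containedness (no reliance on [\ref{AL1}] nor on the uniform deformations $G_j^s$ and the footnoted discussion of domain-dependence of $\bar\partial$-constants). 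Two small points of order, not gaps: your peak estimates for $\phi_{t,\zeta}$ are established only on $\mb B(\zeta,r_0)$, yet you use $|\phi_{t,\zeta}|\le q$ on the annulus $\{3R/4\le|w-\zeta|\le R\}$ and $|\phi_{t,\zeta}|\le 1$ on $\overline{G_t}\cap\mb B(\zeta,R)$, which requires $R\le r_0$ — this is consistent with your monotonicity reduction to a small admissible $R$, but the radii should be fixed in that order; and in that reduction one should note (as your direct argument indeed allows) that admissible radii smaller than the chosen threshold are handled directly, since nothing in the $\bar\partial$-step uses connectivity.
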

\begin{rem} As we have indicated before, all the above estimates are known for single $\mc{C}^2$-smooth strictly pseudoconvex domain $G.$ The novelty of our result is the fact that for the family of strictly pseudoconvex domains as in Situation \ref{Situation} the respective neighbourhoods of boundary points may be chosen to be of uniform size, independently of $t\in T$ and $\zeta_t\in \partial G_t.$
\end{rem}
The main ingredient in the proof of Theorem \ref{Bergman} will be the following 
\begin{theorem}\label{L2h}
Let $(G_t)_{t\in T}$ be a family of strictly pseudoconvex domains as in Situation \ref{Situation}. Then there exist an  $R>0$ such that the set $G_t\cap\mb{B}(\zeta,R)$ is connected for any $t\in T,\zeta\in\partial G_t$ and for every such $R$ there exists a $\rho< R$ with the property that for any $\varepsilon>0$ there exists an $L=L(\varepsilon, R)>0$ such that for any $t\in T,\zeta_t\in\partial G_t, f_t\in L^2_h(G_t\cap\mb{B}(\zeta_t,R)),$ and any point $w\in G_t\cap\mb{B}(\zeta_t,\rho)$ there exist an $\hat{f}_t\in L^2_h(G_t)$ such that
\begin{enumerate}[\indent\upshape(A)]
\item $D^{\alpha}\hat{f}_t(w)=D^{\alpha}f_t(w)$ for $|\alpha|\leq 1$,
\item $\|\hat{f}_t\|_{L^2(G_t)}\leq L\|f_t\|_{L^2(G_t\cap\mb{B}(\zeta_t,R))}$, 
\item $\|\hat{f}_t-f_t\|_{L^2(G_t\cap\mb{B}(\zeta_t,\rho))}<\varepsilon\|f\|_{L^2(G_t\cap\mb{B}(\zeta_t,R))}.$
\end{enumerate}
\end{theorem}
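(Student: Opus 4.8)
The plan is to split the statement into two logically independent parts, keeping every constant tracked uniformly in $t$. First I would produce a single $R>0$ for which $G_t\cap\mb B(\zeta,R)$ is connected for all $t\in T$ and $\zeta\in\partial G_t$; then, for \emph{any} radius $R$ with that property, prove (A)--(C) by one weighted $\bar\partial$-solve on $G_t$, arranged so that the approximation constant and the norm bound $L$ depend only on $R$, $\varepsilon$, $n$, and the fixed data of the family, and in particular not on $t$, $\zeta_t$, $f_t$ or $w$. For the uniform $R$: by compactness of $T$ and condition (iii) of Situation \ref{Situation}, the map $t\mapsto r_t$ is uniformly continuous into $\mc C^2(U)$, so $\{r_t:t\in T\}$ is compact there and $b:=\sup_t\|r_t\|_{\mc C^2(U)}<\infty$; moreover $\Sigma:=\{(t,z)\in T\times U:r_t(z)=0\}=\{(t,z):z\in\partial G_t\}$ is closed in $T\times U$ and, by (i), contained in the compact set $T\times\overline{\bigcup_t\partial G_t}$, hence is compact, so $a:=\inf_\Sigma\|\nabla r_t\|>0$ (the gradient being nonzero along a $\mc C^2$ boundary). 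Fixing $M$ with $\overline U\s\mb B(0,M)$, every $G_t$ lies in $\mb B(0,M)$, since a bounded domain is contained in any ball containing its boundary. From the uniform data $a$, $b$ and $\operatorname{dist}(\overline{\bigcup_t\partial G_t},\partial U)>0$, the standard local graph description of a $\mc C^2$ boundary --- after a rigid motion taking $\zeta$ to the origin and $\nabla r_t(\zeta)$ to a coordinate axis, $\partial G_t$ is the graph of a function of $\mc C^2$-norm $O(b/a)$ vanishing to first order at $0$, and $G_t\cap\mb B(\zeta,R)$ is the region below it inside the ball --- gives $R_0=R_0(a,b,U)>0$ with $G_t\cap\mb B(\zeta,R)$ connected (in fact star-shaped) for all $0<R\le R_0$. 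Fix henceforth any $R$ with the connectedness property.

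Set $\rho:=R/4$, and fix once and for all a radial $\chi\in\mc C_0^\infty(\mb B(0,1))$ with $\chi\equiv1$ on $\mb B(0,\tfrac12)$, $\operatorname{supp}\chi\s\mb B(0,\tfrac34)$, $0\le\chi\le1$; put $\chi_{\zeta_t}(z):=\chi((z-\zeta_t)/R)$, so that $\|\bar\partial\chi_{\zeta_t}\|_\infty\le C_0/R$ with $C_0$ absolute and $\chi_{\zeta_t}\equiv1$ near every $w\in\mb B(\zeta_t,\rho)$. Given $t$, $\zeta_t$, $f_t\in L^2_h(G_t\cap\mb B(\zeta_t,R))$ and $w\in G_t\cap\mb B(\zeta_t,\rho)$, the $(0,1)$-form $g_t:=(\bar\partial\chi_{\zeta_t})f_t$, extended by $0$, is $\bar\partial$-closed on $G_t$ and supported in $G_t\cap\{R/2\le|z-\zeta_t|\le3R/4\}$, a set on which $|z-w|\ge R/4$. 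On the pseudoconvex domain $G_t$ I solve $\bar\partial u_t=g_t$ by Hörmander's $L^2$-estimate with the plurisubharmonic weight
\[
\varphi_t(z):=2(n+1)\log|z-w|+N\log\frac{|z-\zeta_t|}{R/2}+|z|^2,\qquad N\in\mb N \text{ to be fixed.}
\]
Since $i\partial\bar\partial\varphi_t\ge i\partial\bar\partial|z|^2\ge\kappa\,\mathrm{Id}$ with $\kappa>0$ absolute, and Hörmander's constant is independent of the domain, one obtains $u_t$ with $\bar\partial u_t=g_t$ and $\int_{G_t}|u_t|^2e^{-\varphi_t}\le\kappa^{-1}\int_{G_t}|g_t|^2e^{-\varphi_t}$. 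On $\operatorname{supp}g_t$ one has $|z-w|^{-2(n+1)}\le(R/4)^{-2(n+1)}$, $(|z-\zeta_t|/(R/2))^{-N}\le1$ and $e^{-|z|^2}\le1$, so the right-hand side is at most $\kappa^{-1}(R/4)^{-2(n+1)}(C_0/R)^2\|f_t\|_{L^2(G_t\cap\mb B(\zeta_t,R))}^2=:C(R)\|f_t\|^2$, with $C(R)$ \emph{independent of $N$, $t$, $\zeta_t$, $w$}.

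All three conclusions now drop out of this single inequality by two-sided control of $e^{-\varphi_t}$. Near $w$ the weight is comparable to $|z-w|^{-2(n+1)}$ and $g_t$ vanishes there, so $u_t$ is holomorphic on a ball $\mb B(w,\varepsilon_0)\s\s G_t$ and $\int_{\mb B(w,\varepsilon_0)}|u_t|^2|z-w|^{-2(n+1)}<\infty$ forces $D^\alpha u_t(w)=0$ for $|\alpha|\le1$; hence $\hat f_t:=\chi_{\zeta_t}f_t-u_t$, which is holomorphic on $G_t$ because $\bar\partial\hat f_t=g_t-g_t=0$, agrees with $f_t$ to first order at $w$ (as $\chi_{\zeta_t}\equiv1$ near $w$), which is (A). On $G_t$ one has $|z|\le M$, $|z-w|\le2M$, $|z-\zeta_t|\le2M$, so $e^{-\varphi_t}\ge(2M)^{-2(n+1)}(R/4M)^{N}e^{-M^2}$, whence $\|u_t\|_{L^2(G_t)}^2\le C(R)(2M)^{2(n+1)}e^{M^2}(4M/R)^{N}\|f_t\|^2$; combined with $\|\chi_{\zeta_t}f_t\|_{L^2(G_t)}\le\|f_t\|_{L^2(G_t\cap\mb B(\zeta_t,R))}$ this yields (B) with $L(\varepsilon,R):=1+(C(R)(2M)^{2(n+1)}e^{M^2}(4M/R)^{N})^{1/2}$. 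Finally, on $G_t\cap\mb B(\zeta_t,\rho)$ we have $\chi_{\zeta_t}\equiv1$, hence $\hat f_t-f_t=-u_t$ there, while $(|z-\zeta_t|/(R/2))^{-N}\ge2^{N}$ gives $e^{-\varphi_t}\ge(2M)^{-2(n+1)}2^{N}e^{-M^2}$ and therefore $\|\hat f_t-f_t\|_{L^2(G_t\cap\mb B(\zeta_t,\rho))}^2\le C(R)(2M)^{2(n+1)}e^{M^2}2^{-N}\|f_t\|^2$. Choosing $N=N(\varepsilon,R)$ so that this coefficient is $<\varepsilon^2$ gives (C) and pins down $L=L(\varepsilon,R)$.

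Granting the classical one-domain (Graham--Kerzman type) scheme, the substance of the argument is the design of $\varphi_t$ and the uniformity bookkeeping. The middle term $N\log(|z-\zeta_t|/(R/2))$ is tuned to the two relevant scales: it is $\ge0$ on $\operatorname{supp}g_t$ --- the annulus of radius $\sim R$ --- so it does not enlarge the Hörmander datum, yet it is $\le-N\log2$ on the small ball $\mb B(\zeta_t,\rho)$, which is precisely what forces $\|\hat f_t-f_t\|_{L^2(G_t\cap\mb B(\zeta_t,\rho))}$ to decay like $2^{-N/2}$; simultaneously $2(n+1)\log|z-w|$ enforces the $1$-jet matching at $w$ and $|z|^2$ supplies a strictly plurisubharmonic budget giving a domain-independent Hörmander constant. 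The remaining work is to verify that $C(R)$, $M$, $\kappa$, $N$, $L$ depend only on $R$, $\varepsilon$, $n$ and the fixed family data, which for the geometric constants reduces, as above, to the compactness of $T$ and of $\{(t,z):z\in\partial G_t\}$ together with (i)--(iii) of Situation \ref{Situation}. I expect the one genuinely delicate point to be the uniform radius $R_0$ for connectedness: extracting, from the uniform $\mc C^2$-bounds and the uniform lower bound on $|\nabla r_t|$, a single ball radius valid simultaneously for all $t$ and all boundary points.
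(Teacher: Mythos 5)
Your proposal is correct in substance, but it follows a genuinely different route from the paper. The paper proves (A)--(C) by combining its uniform peak-function machinery (Theorem \ref{Main}) with strictly pseudoconvex bumpings $G_1^t$ of $G_t$ near $\zeta_t$: the form $(\bar{\partial}\chi)f$ is first damped by the peak power $h_t(\cdot;\zeta_0)^k$, the $\bar{\partial}$-equation is solved on the bumped domain with a H\"ormander constant made uniform through the uniform geometry of the bumpings, the solution is then divided by $h_t(\cdot;\zeta_0)^k$ (the ratio $(d_2/d_3)^k$ producing the smallness in (C)), and finally a first-order Taylor polynomial is added to achieve the jet condition (A). You instead perform a single weighted H\"ormander solve on $G_t$ itself, with the singular weight $2(n+1)\log|z-w|+N\log(|z-\zeta_t|/(R/2))+|z|^2$: the pole at $w$ forces $D^{\alpha}u_t(w)=0$ for $|\alpha|\leq 1$ (so (A) needs no polynomial correction), the adjustable logarithmic term at $\zeta_t$ plays the role of the peak power $h_t^k$ --- it costs nothing on the support of $\bar{\partial}\chi_{\zeta_t}$ but yields the factor $2^{-N}$ on $\mb{B}(\zeta_t,\rho)$ --- and $|z|^2$ supplies a strictly plurisubharmonic lower bound whose H\"ormander constant is domain-independent, so uniformity in $t$, $\zeta_t$, $w$ is automatic once all $G_t$ sit in a fixed ball. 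This buys a shorter and in fact more general argument: for (A)--(C) you use only pseudoconvexity and uniform boundedness, with no peak functions or bumpings, whereas the paper's route leans on the uniform peak functions of [\ref{AL1}] (which it needs anyway in the proof of Theorem \ref{Bergman}). Two standard points should be made explicit in a write-up: H\"ormander's estimate must be quoted in a form valid for non-smooth plurisubharmonic weights (e.g. [\ref{H1}, Theorem 4.4.2], whose factor $(1+|z|^2)^{-2}$ is harmless on bounded domains, or the current-sense curvature version), and the uniform connectedness radius --- which you correctly flag as the one delicate geometric point, and which the paper itself only asserts by taking $\eta_1$ small enough --- deserves the detailed graph argument based on the uniform bounds $a$, $b$ you extracted from Situation \ref{Situation} via compactness of $T$ and of $\{(t,z):z\in\partial G_t\}$.
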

\begin{rem} 
For a fixed domain $G=G_{t_0}$, Theorem \ref{L2h} may be viewed as a variant of Theorem 1 from [\ref{Die}]. In that result, one starts with a square integrable holomorphic function defined on some small open set touching fixed boundary point that sticks out of $G$, and gets the approximation on some smaller subset of $G$ by functions from $L^2_h(G)$. Here, we start with $L^2_h$ function on small subset of $G$ touching $\partial G$, and get the same type of approximation. Note that our proof of Theorem \ref{L2h} is essentially different than the proof of the mentioned result (see Section \ref{proofs} for the details). 
\end{rem}
\begin{rem}
In [\ref{AL2}] we proved an analogous result for bounded holomorphic functions, instead of square integrable holomorphic ones (see also [\ref{Gra}]). The proof of said assertion is based on stating and solving certain family of subtle $\bar{\partial}$ problems on some deformations of the domains $G_t$, with estimates that do not depend on $t\in T$ and $\zeta_t\in\partial G_t.$ The idea of the proof of Theorem \ref{L2h} is similar, although here some estimations must be carried out more carefully. The details are given in Section \ref{proofs}. The issue of the domain dependence for the $\bar{\partial}$ equation has been recently investigated in [\ref{GK}].
\end{rem}
In Section \ref{preliminaries} we collect some preliminary facts about the strictly pseudoconvex domains. The proofs of Theorems \ref{Bergman} and \ref{L2h} are given in Section ~\ref{proofs}.

\section{Strictly pseudoconvex domains}\label{preliminaries}

A bounded domain $G\s\mb{C}^n$ is called \emph{strictly pseudoconvex} if there exist a neighborhood $U$ of $\partial G$ and a \emph{defining function} $r\in\mc{C}^2(U,\mb{R})$ such that
\begin{enumerate}
{\item[(I)] $G\cap U=\{z\in U:r(z)<0\}$,\label{Condition1}}
{\item[(II)] $(\mb{C}^n\setminus\overline{G})\cap U=\{z\in U:r(z)>0\},\label{Condition2}$}
{\item[(III)] $\nabla r(z)\neq 0$ for $z\in \partial G,$ where $\nabla r(z):=\left(\frac{\partial r}{\partial\overline{z}_1}(z),\cdots,\frac{\partial r}{\partial\overline{z}_n}(z)\right)$,\label{Condition3}}
\end{enumerate}
and with the property that $$\mc{L}_r(z;X)>0\text{\ for\ } z\in\partial G\text{\ and\ nonzero\ }X\in T_z^{\mb{C}}(\partial D),$$  
where $\mc{L}_r$ denotes the Levi form of $r$ and $T_z^{\mb{C}}(\partial G)$ is the complex tangent space to $\partial G$ at $z$.\\ 
\indent It is known that $U$ and $r$ can be chosen to satisfy (I)-(III) and, additionally:
\begin{enumerate}
{\item[(IV)] $\mc{L}_r(z;X)>0$ for $z\in U$ and all nonzero $X\in\mb{C}^n,$\label{Condition4}}
\end{enumerate}
cf. [\ref{Kra1}]. It is well known that every boundary point of such $G$ is a \emph{peak point} with respect to $\mc{O}(\overline{G})$ - the family of all functions holomorphic in a neighbourhood of the closure of $G$, i.e., for any $\zeta\in\partial G$ there exist a function $f\in\mc{O}(\overline{G})$ such that $f(\zeta)=1$ and $|f(z)|<1$ for $z\in G\setminus\{\zeta\}.$ Actually, much more is known: for example, in [\ref{AL1}] we have proved the following 
\begin{theorem}
Let $(G_t)_{t\in T}$ be a family of strictly pseudoconvex domains as in Situation \ref{Situation}.
Then there exists an $\varepsilon>0$ such that for any $\eta_1<\varepsilon$ there exist an $\eta_2>0$ and positive constants $d_1,d_2$ such that for any $t\in T$ there exist a domain $\widehat{G_t}$ containing $\overline{G_t}$, and functions $h_t(\cdot;\zeta)\in\mc{O}(\widehat{G_t}),\zeta\in\partial G_t$ fulfilling the following conditions:
\begin{enumerate}
{\item[\emph{(a)}] $h_t(\zeta;\zeta)=1, |h_t(\cdot;\zeta)|<1$ on $\overline{G_t}\setminus\{\zeta\}$ (in particular, $h_t(\cdot;\zeta)$ is a peak function for $G_t$ at $\zeta$), \label{a}}
{\item[\emph{(b)}] $|1-h_t(z;\zeta)|\leq d_1\|z-\zeta\|, z\in\widehat{G_t}\cap\mb{B}(\zeta,\eta_2),$\label{b}}
{\item[\emph{(c)}] $|h_t(z;\zeta)|\leq d_2<1, z\in\overline{G_t},\|z-\zeta\|\geq\eta_1.$\label{c}}
\end{enumerate}
\label{Main}
\end{theorem}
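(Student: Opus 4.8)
The plan is to construct, for each $t$ and each $\zeta\in\partial G_t$, a Henkin--Ramirez type holomorphic support function $\Phi_t(\cdot;\zeta)$ on a neighbourhood of $\overline{G_t}$, and to take $h_t(\cdot;\zeta):=\exp(-\Phi_t(\cdot;\zeta))$. The whole point is to run the classical one‑domain construction with every constant controlled uniformly in $t\in T$ and in the boundary point, which is possible thanks to the compactness of $T$ together with condition~(iii) of Situation~\ref{Situation}.

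\emph{Step 1 (uniform data).} Since $t\mapsto r_t$ is continuous into $\mc{C}^2(U)$ and $T$ is compact, the set $\{r_t:t\in T\}$ is relatively compact in $\mc{C}^2(U)$; hence there are $t$‑independent constants $C_0,c_0>0$ with $\|r_t\|_{\mc{C}^2(U)}\le C_0$ and $\mc{L}_{r_t}(z;X)\ge c_0\|X\|^2$ for $z\in U$, $X\in\mb{C}^n$, and the Hessians $D^2r_t$ share a common modulus of continuity on relatively compact subsets of $U$. Fix a uniform $\epsilon_0>0$ small enough that $\widehat{G_t}:=\{z\in U:r_t(z)<\epsilon_0\}$ is, for every $t$, a strictly pseudoconvex domain with $\overline{G_t}\s\s\widehat{G_t}\s\s U$ (use condition~(IV)); since $\|\nabla r_t\|_{\infty}\le C_0$ and $r_t(\zeta)=0$ for $\zeta\in\partial G_t$, there is a uniform $\eta_2>0$ with $\mb{B}(\zeta,\eta_2)\s\widehat{G_t}$ for all $t$ and all $\zeta\in\partial G_t$. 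The family $(\widehat{G_t})_{t\in T}$ is again of the type in Situation~\ref{Situation}, with uniform data.

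\emph{Step 2 (uniform support functions).} For $\zeta\in\partial G_t$, the Levi polynomial of $r_t$ at $\zeta$ is a polynomial in $z$ with uniformly bounded coefficients; Taylor's formula for $r_t$, the uniform Levi lower bound $c_0$, and the common modulus of continuity of the $D^2r_t$ show that it satisfies, on a uniform neighbourhood of the diagonal, the Henkin estimate $\mathrm{Re}\,\Phi_t(z;\zeta)\gtrsim\|z-\zeta\|^2-r_t(z)$ with a uniform implied constant. Running the classical globalisation --- multiply this local holomorphic object by a cut‑off, whose $\bar\partial$ is then supported off the diagonal and hence uniformly bounded, and correct by solving the corresponding $\bar\partial$‑equation on $\widehat{G_t}$ --- produces a function $\Phi_t(z;\zeta)$, holomorphic in $z\in\widehat{G_t}$, jointly $\mc{C}^1$, with $\Phi_t(\zeta;\zeta)=0$,
\[
\mathrm{Re}\,\Phi_t(z;\zeta)\ \ge\ c\bigl(\|z-\zeta\|^2-r_t(z)\bigr)\qquad(z\in\widehat{G_t},\ \zeta\in\partial G_t)
\]
for a uniform $c>0$, and with the $\mc{C}^1$‑norm of $\Phi_t(\cdot;\zeta)$ on $\mb{B}(\zeta,\eta_2)$ bounded by a uniform $C_1$. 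Uniformity here is exactly the assertion that the $L^\infty$‑estimate (and the interior regularity) for the correcting $\bar\partial$‑problem depends only on $C_0$, $c_0$, $\epsilon_0$ and the above modulus of continuity --- i.e. only on the uniform data of the family $(\widehat{G_t})_t$ --- which is the parametrised version of the classical fact for a single strictly pseudoconvex domain. This step is the technical heart of the argument.

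\emph{Step 3 (conclusion).} Put $h_t(z;\zeta):=\exp(-\Phi_t(z;\zeta))$, holomorphic in $z$ on $\widehat{G_t}$. Then $h_t(\zeta;\zeta)=1$ and, for $z\in\overline{G_t}$ with $z\ne\zeta$ (so $r_t(z)\le0$), $|h_t(z;\zeta)|=e^{-\mathrm{Re}\,\Phi_t(z;\zeta)}\le e^{-c\|z-\zeta\|^2}<1$; this is~(a). For~(b): on the convex ball $\mb{B}(\zeta,\eta_2)\s\widehat{G_t}$ the function $\Phi_t(\cdot;\zeta)$ is $\mc{C}^1$, vanishes at $\zeta$, and has gradient bounded by $C_1$, so $|\Phi_t(z;\zeta)|\le C_1\|z-\zeta\|$ there, whence $|1-h_t(z;\zeta)|=|1-e^{-\Phi_t(z;\zeta)}|\le C_1e^{C_1\eta_2}\|z-\zeta\|=:d_1\|z-\zeta\|$ on $\widehat{G_t}\cap\mb{B}(\zeta,\eta_2)$. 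For~(c): if $z\in\overline{G_t}$ and $\|z-\zeta\|\ge\eta_1$ then $|h_t(z;\zeta)|\le e^{-c\|z-\zeta\|^2}\le e^{-c\eta_1^2}=:d_2<1$. Taking $\varepsilon:=\eta_2$ (so that Steps 1--2 apply whenever $\eta_1<\varepsilon$) finishes the proof. The main obstacle is Step~2: one must check that the classical local‑to‑global construction of the support function respects the parameter $t$, which reduces --- via the compactness of $T$ and condition~(iii) of Situation~\ref{Situation} --- to tracking the dependence of the $\bar\partial$‑solution operators on the defining functions $r_t$.
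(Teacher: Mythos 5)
First, a point of reference: this paper does not actually prove Theorem \ref{Main} --- it is quoted from [\ref{AL1}] --- so there is no in-paper proof to compare against, and your proposal has to stand on its own. As written, it does not: the gap sits exactly in Step 2, the step you yourself call the technical heart. There are two distinct problems. (i) The uniform $L^\infty$ (plus interior $\mc{C}^1$) estimate for the correcting $\bar\partial$-problem on the variable domains $\widehat{G_t}$ is asserted, not proved, and with only $\mc{C}^2$ boundaries it is not a routine ``parametrised version of a classical fact'': the standard sup-norm solution operators with controlled constants require $\mc{C}^4$ (Kerzman) or $\mc{C}^3$ (Range) regularity --- the paper itself points this out in a footnote --- while what survives at the $\mc{C}^2$ level with uniform constants is H\"ormander's $L^2$ theory, whose conversion to pointwise or $\mc{C}^1$ bounds degenerates near $\partial G_t$, i.e.\ precisely where the support function must be controlled.

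(ii) More seriously, even granting a uniform bound $\|v\|_\infty\le C$ for the correction, the scheme ``cut off the Levi polynomial $P_t(\cdot;\zeta)$ and subtract the $\bar\partial$-correction'' cannot yield the properties you claim for $\Phi_t$. The Levi estimate $\mathrm{Re}\,P_t\ge c\bigl(\|z-\zeta\|^2-r_t(z)\bigr)$ holds only for $\|z-\zeta\|$ small (it comes from the second-order Taylor expansion), and on the annulus where $\bar\partial\chi$ is supported one only has $\mathrm{Re}\,P_t\gtrsim\delta^2$, a fixed small number; hence the datum $(\bar\partial\chi)P_t$ is not small, the correction $v$ is bounded but in no sense small, and $\Phi_t:=\chi P_t-v$ need satisfy neither $\Phi_t(\zeta;\zeta)=0$ nor the global inequality $\mathrm{Re}\,\Phi_t\ge c\bigl(\|z-\zeta\|^2-r_t(z)\bigr)$: the additive error swamps both the quadratic term near $\zeta$ and the $\delta^2$-sized positivity on the annulus, so (a)--(c) for $h_t=e^{-\Phi_t}$ do not follow. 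The known remedies are exactly what makes the cited proof nontrivial: make the $\bar\partial$-datum exponentially small by working with high powers of the local peak function $e^{-kP_t}$ (the same device this paper uses with $h_t(\cdot;\zeta)^k$ in the proof of Theorem \ref{L2h}) and then repair the value at $\zeta$, or use the multiplicative Henkin--Ram\'irez division construction --- which, however, gives only nonvanishing of the support function away from $\zeta$, not positivity of its real part, so a further argument is still needed to reach a peak function with the quantitative properties (b) and (c). Your Step 3 is fine once a $\Phi_t$ with the stated properties exists, and your Step 1 is unobjectionable; the missing content is precisely the construction you compressed into Step 2.
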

\begin{rem}
We would like to draw the Reader's attention to the fact that all the constants $\varepsilon,\eta_2,d_1,d_2$ in Theorem \ref{Main} may be chosen independently of $t$, which is of great importance in the proofs of Theorems \ref{Bergman} and \ref{L2h}.  
\end{rem}
\section{The proofs}\label{proofs}
We begin with the proof of Theorem \ref{L2h} on approximation of $L^2_h$ functions.
\begin{proof}[Proof of Theorem \ref{L2h}] The proof of Theorem \ref{L2h} is similar to the proof of Theorem 1.5 from [\ref{AL2}]. Therefore, we here focus mainly on the part of it that is new.\\   
Set $\eta_2<\eta_1,d_1,d_2<1,\widehat{G_t},$ and $h_t(\cdot;\zeta)$ for $t\in T,\zeta\in\partial G_t$ according to Theorem \ref{Main}, where $\eta_1$ is small enough to assure that the set $G_t\cap\mb{B}(\zeta,R)$ is connected for every $t\in T$ and $\zeta\in\partial G_t,$ where $R:=2\eta_1.$ Replacing $h_t$ with $\frac{h_t+3}{4}$ we may assume that $|h_t(z;\zeta)|\geq\frac{1}{2}, z\in\overline{G_t},\zeta\in\partial G_t.$\\
Let $d_3\in(d_2,1)$ and choose $0<\eta\leq\frac{\eta_2}{2}$ such that for any $t\in T$ we have $\mb{B}(\zeta;2\eta)\s\widehat{G_t}$ for all $\zeta\in\partial G_t$ as well as $|h_t(z;\zeta)|\geq d_3$ whenever $\zeta\in\partial G_t$ and $\|z-\zeta\|\leq\eta$ (this is possible because of the uniform choice of $d_1$ in Theorem \ref{Main}). Define $\rho:=\min\{\frac{\eta}{2},\frac{\eta_1}{5}\}.$\\ 
As in [\ref{AL2}], we show that for any $s\in T$ we may choose points $\zeta_1^s,\ldots,\zeta_{N_s}^s\in\partial G_s$ such that $\displaystyle{\partial G_s\s\bigcup_{j=1}^{N_s}\mb{B}(\zeta_j^s,\rho)},$ and with the property that for any $j\in\{1,\ldots,N_s\}$ we can find strictly pseudoconvex $\mc{C}^2$ deformation $G_j^s$ of $G_s$ near $\zeta_j^s$ such that
\begin{equation} G_s\s G_j^s\s\widehat{G_s}\cap G_s^{(\eta)} (G_s^{(\eta)} \text{\rm\ standing\  for\  the\ }\eta\text{-hull\ of\ }G_s),
\end{equation}
\begin{equation} 
\overline{G_s}\cap\overline{\mb{B}(\zeta_j^s,2\rho)}\s\s G_j^s \text{\ and\ dist}(\overline{G_s}\cap\overline{\mb{B}(\zeta_j^s,2\rho)},\partial G_j^t)\geq {\beta}>0,\label{eq02}
\end{equation}
and
\begin{equation}
G_s\setminus\mb{B}(\zeta_j^s,4\rho)=G_j^s\setminus\mb{B}(\zeta_j^s,4\rho),
\end{equation}
where the constant $\beta$ do not depend on $s$. Moreover, by H\"ormander's $\bar{\partial}$~theory, things may be settled so that there is a positive constant $C$ with the property that for any $s\in T$, and any $j\in\{1,\ldots, N_s\}$ the following estimate holds: for any $\bar{\partial}$-closed $(0,1)$-form $\alpha\in L^2_{(0,1)}(G_j^s)\cap\mc{C}^{\infty}(G_j^s)$ there exists a function $v\in L^2(G_j^s)\cap\mc{C}^{\infty}(G_j^s)$ with $\bar{\partial}v=\alpha$ and such that $\|v\|_{L^2(G_j^s)}\leq C\|\alpha\|_{L_{(0,1)}^2(G_j^s)},$ cf. [\ref{H1}].\footnote{Note that if in Situation \ref{Situation} we assumed all the domains are of class $\mc{C}^4$ and vary in $\mc{C}^4$-topology on domains, the existence of such $C$ would follow also from [\ref{Ker1}, Theorem 1.2.1]~ and remarks following it, together with the compactness of $T$. Eventually, if things were of class $\mc{C}^3$, one could use [\ref{Ran}, Theorem VII. 5.6 and Corollary VII.5.9].}\\
Fix now $t=t_0\in T$ and $\zeta_0\in\partial G_t.$ Let $f\in L^2_h(G_t\cap\mb{B}(\zeta_0,R))$ and take a point $w\in G_t\cap\mb{B}(\zeta_0,\rho)$.\\
There exists a $j_0\in\{1,\ldots N_t\}$ such that $\zeta_0\in\mb{B}(\zeta_{j_0}^t,\rho)$. To simplify the notation, let us assume without loss of generality that $j_0=1.$\\
Choose a $\chi\in\mc{C}^{\infty}(\mb{C}^n,[0,1])$ such that $\chi\equiv 1$ on $\mb{B}(\zeta_0,\frac{6\eta_1}{5})$ and $\chi\equiv 0$ outside $\mb{B}(\zeta_0,\frac{9\eta_1}{5})$ and define $\alpha_t:=(\bar{\partial}\chi)f$ on $G_t\cap\mb{B}(\zeta_0,R)=G_t\cap\mb{B}(\zeta_0,2\eta_1)$ and $\alpha_t:=0$ on $G_t\setminus\mb{B}(\zeta_0,2\eta_1)$. Note that in view of the fact that $\alpha_t\equiv 0$ on $(G_t\cap\mb{B}(\zeta_0,\frac{6\eta_1}{5}))\cup(G_t\setminus\mb{B}(\zeta_0,\frac{9\eta_1}{5}))$, after trivial extension by zero, it can be treated as a $\bar{\partial}$-closed $(0,1)$-form of class $\mc{C}^{\infty}\cap L^2$ on $G_1^t$. Similarly, for $k\in\mb{N}$ the form $\widetilde{\alpha_t}=\widetilde{\alpha_{t,k}}:=h_t(\cdot;\zeta_0)^k\alpha_t$ is of class $\mc{C}^{\infty}\cap L^2$ on $G_1^t$. Let us now consider the equation
\begin{equation}
\bar{\partial}v^t_k=\widetilde{\alpha_t}.\label{E1}
\end{equation}
There exists a solution $v_k^t\in\mc{C}^{\infty}(G_1^t)\cap L^2(G_1^t)$ of the problem (\ref{E1}) such that
$$
\|v_k^t\|_{L^2(G_1^t)}\leq C\|\widetilde{\alpha_t}\|_{L^2_{(0,1)}G_1^t},
$$
with, as mentioned above, $C$ independent of $t\in T$ and of the choice of $\zeta_0.$\\
Further, we have
\begin{multline*}
\|v_k^t\|_{L^2(G_1^t)}\leq C\|\widetilde{\alpha_t}\|_{L^2_{(0,1)}G_1^t}\\=C\sqrt{\int_{G_t\cap(\mb{B}(\zeta_0,\frac{9\eta_1}{5})\setminus\mb{B}(\zeta_0,\frac{6\eta_1}{5}))}|h_t(z;\zeta_0)|^{2k}|f|^2\sum_{j=1}^n\big|\frac{\partial \chi}{\partial z_j}(z)\big|^2d\mc{L}^{2n}(z)}\\\leq\widetilde{C}d_2^k\|f\|_{L^2(G_t\cap\mb{B}(\zeta_0,R))},
\end{multline*}
where the constant $\widetilde{C}$ depends only on $\eta_1$ (in particular, it does not depend on $t$).\\
Define the function $f_k:=\chi f-h_t(\cdot;\zeta_0)^{-k}v_k^t$ and observe it is holomorphic on $G_t$. Consequently, the function $h_t(\cdot;\zeta_0)^{-k}v_k^t$ is holomorphic on $G_1^t\cap\mb{B}(\zeta_0,\eta)$.\\
Furthermore
$$
\|h_t(\cdot;\zeta_0)^{-k}v_k^t\|_{L^2(G_1^t\cap\mb{B}(\zeta_0,\eta))}\leq \widetilde{C}\left(\frac{d_2}{d_3}\right)^k\|f\|_{L^2(G_t\cap\mb{B}(\zeta_0,R))}.
$$
As in [\ref{AL2}], observe that ${G_t}\cap\mb{B}(\zeta_0,\rho)\s\s G_1^t\cap\mb{B}(\zeta_0,\eta)$ and, by (\ref{eq02}), the distance of the former set to the boundary of the latter one is bounded from below by some positive constant, independent of $t$ and the choice of $\zeta_0.$\\ 
We therefore have
$$
\|f_k-f\|_{L^2(G_t\cap\mb{B}(\zeta_0,\eta))}\leq\|h_t(\cdot;\zeta_0)^{-k}v_k^t\|_{L^2(G_1^t\cap\mb{B}(\zeta_0,\eta))}\leq \widetilde{C}\left(\frac{d_2}{d_3}\right)^k\|f\|_{L^2(G_t\cap\mb{B}(\zeta_0,R))}.
$$
Also,
$$
\|f_k-f\|_{G_t\cap\mb{B}(\zeta_0,\rho)}\leq\widehat{C}\|h_t(\cdot;\zeta_0)^{-k}v_k^t\|_{L^2(G_1^t\cap\mb{B}(\zeta_0,\eta))}\leq \widehat{C}\widetilde{C}\left(\frac{d_2}{d_3}\right)^k\|f\|_{L^2(G_t\cap\mb{B}(\zeta_0,R))},
$$
where the positive constant $\widehat{C}$ does not depend on $t$ and $\zeta_0$, in virtue of Bergman inequality and the choice of $\beta$.\\
Similarly, if we take $\kappa>0$ so small that $(G_t\cap\mb{B}(\zeta_0,\rho))^{(\kappa)}$ - the $\kappa$-hull of $G_t\cap\mb{B}(\zeta_0,\rho)$ - stays compactly in $G_1^t\cap\mb{B}(\zeta_0,\eta)$ with the distance to the boundary independent of $t$ and $\zeta_0$, using the Cauchy inequality, we get the estimate
\begin{multline*}
\left\|\frac{\partial f_k}{\partial z_j}-\frac{\partial f}{\partial z_j}\right\|_{G_t\cap\mb{B}(\zeta_0,\rho)}=\left\|\frac{\partial}{\partial z_j}(h_t(\cdot;\zeta_0)^{-k}v_k^t)\right\|_{G_t\cap\mb{B}(\zeta_0,\rho)}\\\leq 
C'\|h_t(\cdot;\zeta_0)^{-k}v_k^t\|_{G_t\cap\mb{B}(\zeta_0,\rho))^{(\kappa)}}\leq C''\|h_t(\cdot;\zeta_0)^{-k}v_k^t\|_{L^2(G_1^t\cap\mb{B}(\zeta_0,\eta))}\\\leq C''\widetilde{C}\left(\frac{d_2}{d_3}\right)^k\|f\|_{L^2(G_t\cap\mb{B}(\zeta_0,R))},
\end{multline*}
where the positive constants $C',C''$ may be chosen independently of $t$ and $\zeta_0$.\\
Fix $\varepsilon>0.$ Define $\hat{f}_k\in\mc{O}(G_t)$ by $\hat{f}_k(z):=f_k(z)+p(z),$ where
$$
p(z):=f(w)-f_k(w)+\sum_{j=1}^n\left(\frac{\partial f}{\partial z_j}(w)-\frac{\partial f_k}{\partial z_j}(w)\right)(z_j-w_j).
$$
It can be easily checked that $\hat{f}_k(w)=f(w),$ as well as $\frac{\partial \hat{f}_k}{\partial z_j}(w)=\frac{\partial f}{\partial z_j}(w).$ Furthermore,
\begin{multline*}
\|\hat{f}_k-f\|_{L^2(G_t\cap\mb{B}(\zeta_0,\rho))}\leq \|f_k-f\|_{L^2(G_t\cap\mb{B}(\zeta_0,\rho))}+\sqrt{\mc{L}^{2n}(U)}|f(w)-f_k(w)|\\+\sqrt{\mc{L}^{2n}(U)}{\text{diam}U}\sum_{j=1}^n\left|\frac{\partial f}{\partial z_j}(w)-\frac{\partial f_k}{\partial z_j}(w)\right|\\\leq \widetilde{C}(1+\sqrt{\mc{L}^{2n}U}(\widehat{C}+{\text{diam}U}C''))\left(\frac{d_2}{d_3}\right)^k\|f\|_{L^2(G_t\cap\mb{B}(\zeta_0,R))}\\\leq\varepsilon\|f\|_{L^2(G_t\cap\mb{B}(\zeta_0,R))},
\end{multline*}
provided that $k=k_0$ is large enough (observe it depends only on $\varepsilon$ and $\eta_1$). Define $\hat{f}:=\hat{f}_{k_0}.$ We estimate:
\begin{multline*}
\|\hat{f}\|_{L^2(G_t)}\leq\|f_{k_0}\|_{L^2(G_t)}+\sqrt{\mc{L}^{2n}(U)}|f(w)-f_{k_0}(w)|\\+\sqrt{\mc{L}^{2n}(U)}{\text{diam}U}\sum_{j=1}^n\left|\frac{\partial f}{\partial z_j}(w)-\frac{\partial f_{k_0}}{\partial z_j}(w)\right|\\\leq
\|f\|_{L^2(G_t\cap\mb{B}(\zeta_0,R))}+\|h_t(\cdot;\zeta_0)^{-k_0}v_{k_0}^t\|_{L^2(G_t)}+\sqrt{\mc{L}^{2n}(U)}|f(w)-f_{k_0}(w)|\\+\sqrt{\mc{L}^{2n}(U)}{\text{diam}U}\sum_{j=1}^n\left|\frac{\partial f}{\partial z_j}(w)-\frac{\partial f_{k_0}}{\partial z_j}(w)\right|\\\leq (1+\widetilde{C}(2d_2)^{k_0}+\varepsilon)\|f\|_{L^2(G_t\cap\mb{B}(\zeta_0,R))}=:L\|f\|_{L^2(G_t\cap\mb{B}(\zeta_0,R))},
\end{multline*}
which concludes the proof of Theorem \ref{L2h}. 
\end{proof}
Let us pass to the proof of localization result for the Bergman metric. It is based on similar idea as the proof of Satz 1 from [\ref{Die}]. We are able to get, however, more information about the respective neighbourhoods of boundary points for the localization, and the independence of their size of the parameter $t$.
\begin{proof}[Proof of Theorem \ref{Bergman}] Let $R$ as in Theorem \ref{L2h}. Observe that the first inequalities in (1) and (2) are obvious. Also, (3) is an easy consequence of (1) and (2). We shall first prove the second inequality in (2). Set $\varepsilon>0$ and $\varepsilon'>0$ such that $(1+2\varepsilon')^2< 1+\frac{\varepsilon}{2}.$ Fix $t$ and $\zeta_0\in\partial G_t$. Let $f\in L^2_h(G_t\cap\mb{B}(\zeta_0,R))$ be such that $\|f\|_{L^2(G_t\cap\mb{B}(\zeta_0,R))}\leq 1$ and let $z\in G_t\cap\mb{B}(\zeta_0,\rho).$ From Theorem \ref{L2h} it follows that there exists an $\hat{f}\in L^2_h(G_t)$ such that $\hat{f}(z)=f(z)$ as well as $\|\hat{f}\|_{L^2(G_t)}\leq L$ and $\|\hat{f}-f\|_{L^2(G_t\cap\mb{B}(\zeta_0,\rho))}<\varepsilon',$ and the constant $L$ depends only on $\varepsilon'$ and $R$.\\
Fix a $\theta'<\rho$ and let $h_t(\cdot;\zeta_0)$ be a function given by Theorem \ref{Main} with $\eta_1=\theta'$ and  unrestricted other parameters. Take $1>\gamma>0$ such that $\gamma L<\varepsilon'$ and small enough to ensure the inequality $\frac{1+\frac{\varepsilon}{2}}{1+\varepsilon}\leq (1-\gamma)^2.$ Also, because of the choice of $\theta'$ and in virtue of Theorem \ref{Main}, there exists a $k\in\mb{N}$, independent on $t,\zeta_0,$ and $h_t(\cdot;\zeta_0)$ such that $|h_t(\cdot;\zeta_0)^k|\leq\gamma$ on $G_t\setminus\mb{B}(\zeta_0,\theta')$. Finally, by (b) in Theorem \ref{Main}, we may choose a $\theta<\theta'$, independent of $t,\zeta_0,$ and $h_t(\cdot;\zeta_0)$ such that $|h_t(\cdot;\zeta_0)^k|>1-\gamma$ on $G_t\cap\mb{B}(\zeta_0,\theta).$\\ 
Consider the function $\tilde{f}:=h_t(\cdot;\zeta_0)^k\hat{f}$. We have
\begin{multline*}
\|\tilde{f}\|_{L^2(G_t)}=\|h_t(\cdot;\zeta_0)^k\hat{f}\|_{L^2(G_t)}\leq \|h_t(\cdot;\zeta_0)^k\hat{f}\|_{L^2(G_t\setminus\mb{B}(\zeta_0,\theta'))}\\+\|h_t(\cdot;\zeta_0)^k\hat{f}\|_{L^2(G_t\cap\mb{B}(\zeta_0,\theta'))}\leq \gamma L+\|\hat{f}\|_{L^2(G_t\cap\mb{B}(\zeta_0,\theta'))}\leq \gamma L+\varepsilon' +1\leq 1+2\varepsilon'.
\end{multline*}
Furthermore, if from the beginning we have had $z\in G_t\cap\mb{B}(\zeta_0,\theta)$, then
$$
|\tilde{f}(z)|=|h_t(z;\zeta_0)^k||\hat{f}(z)|=|h_t(z;\zeta_0)^k||{f}(z)|\geq(1-\gamma)|f(z)|.
$$
This implies that for $z\in G_t\cap\mb{B}(\zeta_0,\theta)$ the following estimates hold
$$
\mf{K}_{G_t}(z)\geq\frac{|\tilde{f}(z)|^2}{(1+2\varepsilon')^2}\geq\frac{(1-\gamma)^2|f(z)|^2}{(1+2\varepsilon')^2},
$$
and consequently, because of arbitrariness of $f$,
$$
(1+\varepsilon)\mf{K}_{G_t}(z)\geq\frac{(1+2\varepsilon')^2}{(1-\gamma)^2}\mf{K}_{G_t}(z)\geq\mf{K}_{G_t\cap\mb{B}(\zeta_0,R)}(z).
$$
This establishes (2). For the proof of (1) we only need to notice that if $f$ in the proof of (2) were chosen so that $f(z)=0$, then $\tilde{f}(z)=0$, for $z\in G_t\cap\mb{B}(\zeta_0,\theta).$ Then for such $z$ and any $X\in\mb{C}^n$ we have
\begin{multline*}
M_{G_t}(z;X)\geq\frac{|\tilde{f}'_X(z)|}{1+2\varepsilon'}=\frac{|(h_t(\cdot;\zeta_0)^k\hat{f})'_X(z)|}{1+2\varepsilon'}=\frac{|(h_t(\cdot;\zeta_0)^k{f})'_X(z)|}{1+2\varepsilon'}\\\geq\frac{(1-\gamma)|{f}'_X(z)|}{1+2\varepsilon'},
\end{multline*}
which yields
$$
(1+\varepsilon)M_{G_t}(z;X)\geq\frac{1+2\varepsilon'}{1-\gamma}M_{G_t}(z;X)\geq M_{G_t\cap\mb{B}(\zeta_0,R)}(z;X),
$$
and this concludes the proof of Theorem \ref{Bergman}.
\end{proof}

\end{document}